\newcommand{\Ext}{\mathrm{Ext}}
\newcommand{\Hom}{\mathrm{Hom}}
\newcommand{\m}{\mathfrak{m}}
\newcommand{\module}{\mathrm{mod}}
\newtheorem{theorem}{Theorem}[section]
\newtheorem*{theorem*}{Theorem}
\newtheorem{lemma}[theorem]{Lemma}
\newtheorem*{lemma*}{Lemma}
\newtheorem*{proposition*}{Proposition}
\newtheorem*{corollary*}{Corollary}
\theoremstyle{definition}
\newtheorem{definition}[theorem]{Definition}
\newtheorem*{definition*}{Definition}
\newtheorem{remark}[theorem]{Remark}
\newtheorem{introtheorem}{Theorem}
\numberwithin{equation}{theorem}
\newcommand{\Tr}{\mathrm{Tr}}
\title{Arf rings, simple singularities and reflexive modules}
\author[\"{O}.~Esentepe]{\"{O}zg\"{u}r Esentepe}
\address{Institut für Mathematik und Wissenschaftliches Rechnen \\Universität Graz \\ 
Heinrichstraße 36, 8010 Graz, Austria}
\email{ozgur.esentepe@uni-graz.at}
\urladdr{https://www.sntp.ca}
\subjclass{13D07, 13C05, 13C14, 13C60}
\keywords{Arf rings, reflexive modules, n-torsionfree modules, totally reflexive modules, simple singularities}
\begin{document}
\maketitle

\begin{abstract}
    In a pandemic era preprint, Dao showed showed two remarkable properties of Arf rings: under some mild conditions, they admit finitely many indecomposable reflexive modules up to isomorphism and every reflexive module is actually isomorphic to its own dual. In fact, the latter property characterises Arf rings. Arf rings are one dimensional rings and it is natural to wonder what happens in higher Krull dimension. In this paper, we investigate the self-dual property for commutative Noetherian local rings.
\end{abstract}

\section{Introduction}

Arf rings were introduced by Joseph Lipman in \cite{Lipman} following the work of Cahit Arf \cite{Arf}. They play an essential role in the study of curve singularities and Lipman utilized the theory of Arf rings to prove, under some mild conditions, a conjecture of Zariski related to strict closures. Arf rings regained popularity amongst commutative algebraists in the last decade, after half a century. In \cite{celikbas-celikbas-ciuperca-endo-goto-isobe} authors proved Zariski's conjecture in full generality. In \cite{Isobe-Kumashiro,Dao,Dao-Lindo}, authors independently showed that Arf rings are of \textit{finite reflexive type}. That is, there are finitely many indecomposable reflexive modules over an Arf ring up to isomorphism, and up to some reasonable assumptions which make the theory work. Dao also showed in \cite{Dao} that if $R$ is a semi-local Cohen-Macaulay equidimensional ring of dimension one whose completion (with respect to its Jacobson radical) is reduced, then $R$ is Arf if and only if every reflexive $R$-module is \textit{self-dual}, that is, isomorphic to its own dual.

This philosophy of relating properties of subcategories of the module category of a local ring with the properties of the corresponding geometric object has emerged in the 1980s. As an important example Auslander \cite{Auslander} and Huneke-Leuschke \cite{Huneke-Leuschke} proved that a Cohen-Macaulay local ring of finite Cohen-Macaulay type is an isolated singularity. In the Gorenstein case, we have an even better picture. A complete Gorenstein ring of finite Cohen-Macaulay type is a simple singularity coming from Arnol'd's work on germs of holomorphic functions \cite{Arnold}. This was proved by the work of Buchweitz-Greuel-Schreyer \cite{Buc-Greu-Schr} and \cite{Knorr}. Following the same philosophy, Christensen-Piepmeyer-Striuli-Takahashi showed two decades later that if $R$ is a complete commutative Noetherian local ring such that $R$ admits finitely many indecomposable Gorenstein projective modules up to isomorphism (and if it admits at least one non-free such module), then it has to be a simple singularity \cite{Christensen-Piepmeyer-Striuli-Takahashi}.

Motivated by all this previous work, we consider two conditions on a commutative Noetherian ring $R$:
\begin{description}
    \item[frt] there are finitely many indecomposable reflexive modules up to isomorphism,
    \item[sd] every reflexive modules is self-dual.  
\end{description}
It is an active research area to study rings which have the property \textbf{(frt)}. In this paper, we focus on the property \textbf{(sd)}. Luckily for us, in lower dimensions we already have satisfactory answers:
\begin{enumerate}
    \item As mentioned above, in dimension 1, a complete Cohen-Macaulay equidimensional ring with a reduced completion has the property \textbf{(sd)} if and only if it is Arf \cite[Theorem B]{Dao}.
    \item In dimension 2, we have found an answer in \cite[Theorem 2.7]{Ooishi} by Ooishi. Let $R$ be a two-dimensional complex analytic normal local domain which is not a regular local ring. Then, $R$ satisfies \textbf{(sd)} if and only if $R$ is a simple singularity given by one of the following polynomials:
    \begin{description}
        \item[($A_1$)] $x^2 + y^2 + z^2$,
        \item[($D_n$)] $x^2y + y^{n-1} + z^2$ with $n$ even and $n \geq 4$,
        \item[($E_7$)] $x^3 + xy^3 + z^2$, 
        \item[($E_8$)] $x^3 + y^5 + z^2$  .
    \end{description}
\end{enumerate}
We prove the following two theorems which illustrate the homological similarity (and unlikeness) of Arf rings to simple singularities. The first main theorem shows that the self-dual property can only be satisfied by hypersurface rings when the depth is at least  two.
\begin{introtheorem}
    Let $R$ be a commutative Noetherian local ring of depth $t \geq 2$. If $R$ satisfies the condition \textbf{(sd)}, then it is a hypersurface ring of dimension at most three.
\end{introtheorem}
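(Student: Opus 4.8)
The plan is to apply \textbf{(sd)} to the high syzygies of the residue field $k$ and read off strong cohomological consequences for $R$. Let $F_{\bullet}\to k$ be a minimal free resolution and write $\om^{n}k$ for its $n$-th syzygy. Because $k_{\mathfrak p}=0$ for every prime $\mathfrak p\neq\m$, each $\om^{n}k$ is free on the punctured spectrum, and for such a module the criterion ``free on the punctured spectrum together with depth at least two'' is equivalent to reflexivity. Since $\depth\om^{n}k=\min\{n,t\}\geq 2$ for $n\geq 2$, every syzygy $\om^{n}k$ with $n\geq 2$ is reflexive, so \textbf{(sd)} gives $\om^{n}k\cong(\om^{n}k)^{*}$ for all $n\geq 2$. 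Dualizing the syzygy sequence $0\to\om^{n+1}k\to F_{n}\to\om^{n}k\to 0$, using $\Ext^{1}(\om^{n}k,R)\cong\Ext^{n+1}(k,R)$, and substituting these self-dualities produces
\[
0\to\om^{n}k\to F_{n}^{*}\to\om^{n+1}k\to \Ext^{n+1}(k,R)\to 0 ,
\]
which I would then compare against the syzygy sequence itself.

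The heart of the proof is a depth count on this comparison. Put $C_{n}=\imof(F_{n}^{*}\to\om^{n+1}k)$, giving short exact sequences $0\to\om^{n}k\to F_{n}^{*}\to C_{n}\to 0$ and $0\to C_{n}\to\om^{n+1}k\to\Ext^{n+1}(k,R)\to 0$. The depth lemma applied to the first yields $\depth C_{n}\geq\min\{n,t\}-1$; since $\Ext^{n+1}(k,R)$ has finite length, the second forces $\depth C_{n}\leq 1$ whenever $\Ext^{n+1}(k,R)\neq 0$. Hence $\Ext^{n+1}(k,R)=0$ as soon as $\min\{n,t\}\geq 3$. Taking $n=t-1$ shows that $t\geq 4$ would give $\Ext^{t}(k,R)=0$, contradicting the fact that $\depth R$ is the least index $i$ with $\Ext^{i}(k,R)\neq 0$; therefore $t\leq 3$. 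Taking instead any $n\geq t$ (so $\min\{n,t\}=t$) shows that when $t\geq 3$ one has $\Ext^{j}(k,R)=0$ for all $j\geq t+1$, whence $R$ has finite injective dimension and is Gorenstein.

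Granting that $R$ is Gorenstein, the conclusion follows quickly. Over a Gorenstein ring the maximal Cohen--Macaulay modules are reflexive with $\Ext^{>0}(-,R)=0$, so dualizing $0\to\om M\to F\to M\to 0$ for $M$ maximal Cohen--Macaulay gives $(\om M)^{*}\cong\om^{-1}(M^{*})$ in the stable category. Combining this with the self-dualities $M^{*}\cong M$ and $(\om M)^{*}\cong\om M$ furnished by \textbf{(sd)} yields $\om M\cong\om^{-1}M$, that is, $\om^{2}M\cong M$ stably for every maximal Cohen--Macaulay $M$. Applying this to $M=\om^{t}k$ makes the minimal resolution of $k$ eventually $2$-periodic, so the Betti numbers of $k$ are bounded; by the classical characterisation of hypersurfaces through bounded Betti numbers of the residue field, $R$ is a hypersurface. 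Being Gorenstein it is Cohen--Macaulay, so $\dim R=\depth R=t\leq 3$, as claimed.

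The step I expect to be the real obstacle is securing Gorensteinness at the bottom of the range. The depth count only produces the vanishing $\Ext^{j}(k,R)=0$ for $j>t$ when $t\geq 3$: at $t=2$ the lower bound $\min\{n,t\}-1=1$ on $\depth C_{n}$ exactly meets the upper bound $1$, so no contradiction arises and non-Gorenstein---indeed non-Cohen--Macaulay---behaviour is not immediately excluded. For this case I would argue directly from the self-duality of the single syzygy $\om^{2}k$: dualizing once more, $(\om^{2}k)^{*}$ sits in $0\to R\to F_{1}^{*}\to(\om^{2}k)^{*}\to\Ext^{2}(k,R)\to 0$, and matching the minimal number of generators of $\om^{2}k$ against that of $(\om^{2}k)^{*}$---where $\Ext^{2}(k,R)$ contributes precisely when $\dim R=2$---should force the embedding dimension down and reduce the problem to the two-dimensional situation handled by Ooishi. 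Turning this boundary analysis into a uniform proof of Cohen--Macaulayness, and hence of the full statement in depth two, is where I expect the main work to lie.
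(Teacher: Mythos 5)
There is a genuine gap, and you have located it yourself: the case $t=2$ is not proved. Your depth-counting argument only exploits the fact that the syzygies $\om^{n}k$ ($n\geq 2$) are \emph{reflexive}, i.e.\ $2$-torsionfree, and the resulting bound $\depth C_{n}\geq\min\{n,t\}-1$ loses exactly one degree of vanishing; at $t=2$ it collides with the upper bound $\depth C_{n}\leq 1$ and yields nothing. The sketch you offer for closing this case does not work in the stated generality: a depth-two local ring satisfying \textbf{(sd)} need not be Cohen--Macaulay, normal, a domain, or complex analytic a priori, so there is no reduction to Ooishi's classification of two-dimensional normal complex analytic singularities; and ``matching minimal numbers of generators'' of $\om^{2}k$ against $(\om^{2}k)^{*}$ is not a proof. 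Since the theorem's hypothesis is exactly $t\geq 2$, the boundary case is not a corner you can wave at --- it is half the content.

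The paper closes this gap by using a strictly stronger input than reflexivity: by Dey--Takahashi, the syzygy $\om^{t}k$ is \emph{$3$-torsionfree} whenever $t=\depth R\geq 2$, and for a self-dual $3$-torsionfree module $M$ one gets $\Ext_R^{1}(M,R)=0$ \emph{directly}, via
\[
\Ext_R^{1}(M,R)\cong\Ext_R^{1}(M^{*},R)\cong\Ext_R^{1}(\om^{2}\Tr M,R)\cong\Ext_R^{3}(\Tr M,R)=0 ,
\]
with no depth count and hence no loss at $t=2$. This vanishing makes $\om^{t+1}k$ again $3$-torsionfree (and self-dual), so the dualized syzygy sequence has no $\Ext^{1}$ obstruction term at all and splices into a $2$-periodic resolution of $\om^{t}k$; Gorensteinness is not needed as an intermediate step. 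For $t\geq 3$ your route is a correct and genuinely more elementary alternative (it avoids the Auslander transpose entirely), and your endgame over a Gorenstein ring --- $\om^{2}M\cong M$ stably from the two self-dualities, bounded Betti numbers, hence hypersurface --- matches the paper's key lemma. But as written the proposal proves the theorem only for $t\geq 3$. To repair it you either need the $3$-torsionfreeness of $\om^{t}k$ (the Dey--Takahashi input) or some other mechanism that produces the vanishing $\Ext_R^{t+1}(k,R)=0$ when $t=2$.
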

The second main theorem also covers rings which have smaller depth than two.
\begin{introtheorem}
    Let $R$ be a complete commutative Noetherian local ring. If $R$ satisfies the conditions \textbf{(frt)} and \textbf{(sd)}, then it is a simple singularity of dimension at most three provided that it admits a non-free 3-torsionfree module.
\end{introtheorem}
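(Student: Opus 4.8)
The plan is to reduce everything to the theorem of Christensen--Piepmeyer--Striuli--Takahashi \cite{Christensen-Piepmeyer-Striuli-Takahashi}: a complete local ring admitting finitely many indecomposable totally reflexive (Gorenstein projective) modules, at least one of them non-free, is a simple singularity. Since every totally reflexive module is in particular reflexive, the hypothesis \textbf{(frt)} immediately gives that there are only finitely many indecomposable totally reflexive modules. Thus the entire problem becomes: manufacture a single non-free totally reflexive module out of the given non-free $3$-torsionfree module $M$, using only \textbf{(sd)}. Once $R$ is known to be a simple singularity it is a hypersurface, hence Gorenstein and Cohen--Macaulay; the bound $\dim R\le 3$ then follows by splitting on depth. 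If $\depth R\ge 2$ I invoke Theorem~A (whose hypotheses \textbf{(sd)} and $\depth R\ge 2$ both hold) to conclude that $R$ is a hypersurface of dimension at most three, while if $\depth R\le 1$ then $\dim R=\depth R\le 1\le 3$. I emphasise that the dimension bound is carried entirely by Theorem~A, not by \cite{Christensen-Piepmeyer-Striuli-Takahashi}, since ADE hypersurfaces exist in every dimension.

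The heart of the argument is a self-improvement of torsionfreeness. Write $(-)^{*}=\Hom(-,R)$ and let $\Tr$ denote the Auslander transpose. I would begin from the standard stable isomorphism $M^{*}\cong \Omega^{2}\Tr M$, obtained by dualising a minimal presentation of $M$. Combining this with \textbf{(sd)}, which gives $M\cong M^{*}$, and with the syzygy shift $\Ext^{1}(\Omega^{2}\Tr M,R)\cong\Ext^{3}(\Tr M,R)$, the $3$-torsionfreeness of $M$ (namely $\Ext^{i}(\Tr M,R)=0$ for $1\le i\le 3$) yields $\Ext^{1}(M,R)=0$. Next I would record that $\Omega M$ is again reflexive — this is the precise point where the exponent $3$ is spent, as the syzygy of a $3$-torsionfree module is $2$-torsionfree — so that \textbf{(sd)} applies to $\Omega M$ and gives $(\Omega M)^{*}\cong\Omega M$. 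Feeding $\Ext^{1}(M,R)=0$ into the exact sequence $0\to\Omega\Tr M\to(\Omega M)^{*}\to\Ext^{1}(M,R)\to 0$ produces $\Omega\Tr M\cong(\Omega M)^{*}\cong\Omega M$, and cancelling the syzygy (legitimate over a local ring after discarding free summands) gives the striking conclusion $\Tr M\cong M$ stably.

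With $\Tr M\cong M$ in hand the proof finishes quickly. On the one hand $M^{*}\cong\Omega^{2}\Tr M\cong\Omega^{2}M$ together with $M\cong M^{*}$ gives the periodicity $M\cong\Omega^{2}M$, so $\Ext^{i}(M,R)\cong\Ext^{i+2}(M,R)$ for all $i\ge 1$. On the other hand $\Tr M\cong M$ turns the vanishing $\Ext^{i}(\Tr M,R)=0$ $(1\le i\le 3)$ into $\Ext^{i}(M,R)=0$ for $1\le i\le 3$. The two facts together force $\Ext^{i}(M,R)=0$ for every $i\ge 1$, and the same holds for $M^{*}\cong M$; since $M$ is already reflexive, $M$ is totally reflexive and non-free, which is exactly the input needed for \cite{Christensen-Piepmeyer-Striuli-Takahashi}. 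The main obstacle I anticipate is not any single step above but the bookkeeping of the Auslander--Bridger machinery in the non-Gorenstein setting — in particular verifying the syzygy/transpose identities and the reflexivity of $\Omega M$ with the correct indices, and checking that stripping free summands does not disturb $3$-torsionfreeness. Everything else is a matter of assembling \textbf{(frt)}, Theorem~A and \cite{Christensen-Piepmeyer-Striuli-Takahashi}.
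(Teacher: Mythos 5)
Your overall architecture --- reduce to the Christensen--Piepmeyer--Striuli--Takahashi theorem by showing the given non-free $3$-torsionfree module is totally reflexive, and obtain the dimension bound from Theorem A together with Cohen--Macaulayness of a simple singularity --- is exactly the paper's, and your opening step ($M\cong M^*\cong\Omega^2\Tr M$ stably, hence $\Ext^1_R(M,R)=\Ext^3_R(\Tr M,R)=0$) is the paper's Lemma \ref{first-lemma} verbatim. The problem is the middle of your argument. From $\Omega\Tr M\cong(\Omega M)^*\cong\Omega M$ (up to free summands) you ``cancel the syzygy'' to conclude $\Tr M\cong M$ stably. This cancellation is not legitimate over a general local ring: $\Omega$ is not injective on stable isomorphism classes, since a module is not determined by its first syzygy (the quotient $P/\Omega X$ depends on the embedding $\Omega X\hookrightarrow P$, and cosyzygies are only well defined in settings such as the stable category of maximal Cohen--Macaulay modules over a Gorenstein ring --- which is essentially what you are in the process of proving, so the reasoning would be circular). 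Krull--Schmidt over a complete local ring lets you cancel free direct summands, not an application of $\Omega$. Since $\Tr M\cong M$ is what powers both your $2$-periodicity and the vanishing $\Ext^i_R(M,R)=0$ for $i\le 3$, the proof as written does not go through. A smaller quibble: your parenthetical justification that $\Omega M$ is reflexive, namely ``the syzygy of a $3$-torsionfree module is $2$-torsionfree,'' is not a correct general statement; the correct tool is that $M$ being $n$-torsionfree together with $\Ext^1_R(M,R)=0$ makes $\Omega M$ $(n+1)$-torsionfree --- which, to be fair, you do have available at that point.

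The repair is shorter than your detour and is what the paper does in Lemma \ref{second-lemma}: having shown $\Ext^1_R(M,R)=0$, conclude that $\Omega M$ is $4$-torsionfree, hence $3$-torsionfree and in particular reflexive, hence self-dual by \textbf{(sd)}; applying Lemma \ref{first-lemma} to $\Omega M$ gives $\Ext^2_R(M,R)=\Ext^1_R(\Omega M,R)=0$, and induction on syzygies yields $\Ext^i_R(M,R)=0$ for all $i>0$. Together with $M^*\cong M$ and reflexivity, this makes $M$ totally reflexive and non-free, and the remainder of your argument (finiteness of indecomposable totally reflexive modules from \textbf{(frt)}, the Christensen--Piepmeyer--Striuli--Takahashi theorem, and the dimension bound via Theorem A and the Cohen--Macaulay property) is correct.
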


The proofs of these theorems depend on a simple observation on $3$-torsionfree modules over a ring that satisfies \textbf{(sd)}. In the next section, we will recall the theory of $n$-torsionfree modules and prove our theorems.

\section{Self-dual modules}

We will start with giving the necessary background, set notations and conventions.Throughout this paper, we will always work with commutative Noetherian rings and all modules will be assumed to be finitely generated. We will denote by $\module R$ the category of finitely generated $R$-modules.

\begin{definition}
    Let $R$ be a commutative Noetherian ring and $M$ be a finitely generated $R$-module.
    \begin{enumerate}
        \item We denote by $(-)^*$ the $R$-dual functor $\Hom_R(-,R) : \module R \to \module R$. We call $M^*$ the \textit{dual} of $M$. We say that $M$ is \textit{reflexive} if the canonical map $M \to M^{**}$ is an isomorphism and $M$ is \textit{self-dual} if there is an isomorphism $M \cong M^*$. 
        \item Given a projective $R$-module $P$ and a surjective morphism $P \to M \to 0$, we call the kernel a \textit{syzygy} of $M$ and denote it by $\Omega_R M$. This is uniquely determined up to projective summands by Schanuel's Lemma. We drop the subscript when the context is clear.
        \item Given a projective presentation $P_1 \xrightarrow{d} P_0 \to M \to 0$ of $M$, we call the cokernel of $d^*$ an \textit{Auslander transpose} of $M$ and denote it by $\Tr M$. It is also uniquely determined up to projective summands \cite{Auslander-Bridger}.
        \item For a positive integer $n$, we say that $M$ is \textit{$n$-torsionfree} if $\Ext_R^i(\Tr M, R) = 0$ for $i = 1, \ldots, n$.
        \item We say that $M$ is a \textit{Gorenstein projective} or \textit{totally reflexive} if $M$ is reflexive and for all $i > 0$, we have $\Ext_R^i(M, R) = \Ext_R^i(M^*, R) = 0$. 
\end{enumerate}
\end{definition}

We recall some well-known facts.
\begin{remark}
    Let $R$ be a commutative Noetherian ring and $M$ be a finitely generated $R$-module.
    \begin{enumerate}
        \item The module $M$ is reflexive if and only if there is \textit{some} isomorphism $M \cong M^{**}$ \cite[Proposition 1.1.9]{Christensen}. Therefore, a self-dual $R$-module is reflexive.
        \item If $R$ is a local normal domain of Krull dimension 2, then a module $M$ is reflexive if and only if it is maximal Cohen-Macaulay. See, for instance, \cite[A.14 Corollary]{Leuschke-Wiegand}.
        \item We have an exact sequence 
        \begin{align*}
        0 \to \Ext_R^1(\Tr M, R) \to M \to M^{**} \to \Ext_R^2(\Tr M, R) \to 0
        \end{align*}
        which shows that $M$ is reflexive if and only if it is $2$-torsionfree. This was proved by Auslander-Bridger in \cite{Auslander-Bridger}.
        \item If $M$ is $n$-torsionfree and $\Ext_R^1(M,R) = 0$, then $\Omega M$ is $(n+1)$-torsionfree. See, for instance, \cite[Lemma 2.3]{Iyama-Wemyss} and the preceding discussion.       
    \end{enumerate}    
\end{remark}

Let us start with a simple observation.

\begin{lemma}\label{first-lemma}
    Let $R$ be a commutative Noetherian ring and $M$ be a finitely generated $R$-module. If $M$ is self-dual and $3$-torsionfree, then we have $\Ext_R^{1}(M,R) = 0$. 
\end{lemma}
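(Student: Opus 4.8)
The plan is to reduce the whole statement to the $3$-torsionfreeness hypothesis by rewriting $\Ext_R^1(M,R)$ in terms of $\Tr M$. Since $M$ is self-dual there is an isomorphism $M \cong M^*$, and hence $\Ext_R^1(M,R) \cong \Ext_R^1(M^*, R)$. So it suffices to prove that $\Ext_R^1(M^*, R) = 0$, which replaces the self-dual hypothesis by a single concrete input.

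First I would identify $M^*$ as an iterated syzygy of the transpose. Fix a projective presentation $P_1 \xrightarrow{d} P_0 \to M \to 0$, so that $\Tr M = \coker(d^*)$. Dualizing and using that $(-)^* = \Hom_R(-,R)$ is left exact produces the four-term exact sequence
\begin{equation*}
0 \to M^* \to P_0^* \xrightarrow{d^*} P_1^* \to \Tr M \to 0.
\end{equation*}
Splitting this through $\imof(d^*)$ into two short exact sequences exhibits $M^*$ as a second syzygy of $\Tr M$, i.e.\ $M^* \cong \Omega^2 \Tr M$ up to projective summands. (This is the standard Auslander--Bridger identity, but it is cleanest to read it off directly from the presentation rather than to invoke it.)

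Then I would dimension-shift along these two short exact sequences: because the $P_i^*$ are projective, their higher $\Ext$ into $R$ vanish, giving $\Ext_R^1(M^*, R) \cong \Ext_R^2(\Omega \Tr M, R) \cong \Ext_R^3(\Tr M, R)$, and projective summands are harmless since they do not affect $\Ext_R^{\geq 1}(-,R)$. Finally, $M$ being $3$-torsionfree says exactly that $\Ext_R^i(\Tr M, R) = 0$ for $i = 1,2,3$, so in particular $\Ext_R^3(\Tr M, R) = 0$; chaining the isomorphisms yields $\Ext_R^1(M,R) = 0$.

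The only place that needs genuine care is the syzygy bookkeeping in the middle step: both dimension shifts must land in degrees $\geq 1$ so that the projective terms actually drop out of the long exact sequences, and one must note that passing between $M^*$ and $\Omega^2 \Tr M$ modulo projective summands leaves the first $\Ext$-group unchanged. Everything else is formal, and notably reflexivity is never invoked beyond what the stated hypotheses already supply.
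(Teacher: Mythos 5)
Your argument is correct and is exactly the paper's proof, namely the chain $\Ext_R^1(M,R) \cong \Ext_R^1(M^*,R) \cong \Ext_R^1(\Omega^2 \Tr M, R) \cong \Ext_R^3(\Tr M, R) = 0$; you have merely unpacked the identification $M^* \cong \Omega^2 \Tr M$ and the dimension shifts that the paper leaves implicit. The bookkeeping about projective summands and the degree ranges is handled correctly.
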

\begin{proof}
    Let $M$ be self-dual and $3$-torsionfree. Then, we have 
    \begin{align*}
     \Ext_R^1(M,R) = \Ext_R^1(M^*, R)= \Ext_R^1(\Omega^2 \Tr M, R) = \Ext_R^3(\Tr M, R)=0.
    \end{align*}  
\end{proof}
The following lemma follows from Lemma \ref{first-lemma}. It shows that if all reflexive modules are self-dual, then $3$-torsionfree modules and totally reflexive modules coincide.
\begin{lemma}\label{second-lemma}
    Let $R$ be a commutative Noetherian ring that satisfies the property \textbf{(sd)}. Assume $M$ is a finitely generated $R$-module.
    \begin{enumerate}
        \item $M$ is $3$-torsionfree if and only if it is totally reflexive.
        \item If for some integer $n \geq 0$, the $n$th syzygy $\Omega^n M$ is $3$-torsionfree, then for any $i > n$, we have $\Ext^i_R(M,R) = 0$.
    \end{enumerate}
\end{lemma}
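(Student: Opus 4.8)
The plan is to deduce both statements from Lemma~\ref{first-lemma}, combined with the standard syzygy/transpose bookkeeping already implicit above: the stable isomorphism $\Omega^2\Tr M \cong M^{*}$ used in the proof of Lemma~\ref{first-lemma}, together with dimension shifting for $\Ext_R(-,R)$. The engine of the argument is a bootstrapping loop: over a ring with \textbf{(sd)}, a $3$-torsionfree module is $2$-torsionfree, hence reflexive, hence self-dual, hence has vanishing $\Ext^1_R(-,R)$ by Lemma~\ref{first-lemma}; part~(4) of the Remark then promotes its syzygy back to a $3$-torsionfree module, so the reasoning iterates down the entire syzygy sequence.

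For the backward implication of~(1), I would note that it needs no hypothesis on $R$: if $M$ is totally reflexive then it is reflexive, so $\Ext^1_R(\Tr M,R)=\Ext^2_R(\Tr M,R)=0$ by the Auslander--Bridger exact sequence of part~(3) of the Remark, while $\Ext^3_R(\Tr M,R)\cong\Ext^1_R(\Omega^2\Tr M,R)\cong\Ext^1_R(M^{*},R)=0$ because every positive $\Ext_R(M^{*},R)$ vanishes; thus $M$ is $3$-torsionfree. For the forward implication I would prove by induction on $n$ that $\Omega^n M$ is $3$-torsionfree for all $n\geq 0$. The base case is the hypothesis. If $\Omega^n M$ is $3$-torsionfree, then it is reflexive and therefore self-dual by \textbf{(sd)}, so Lemma~\ref{first-lemma} yields $\Ext^1_R(\Omega^n M,R)=0$, and part~(4) of the Remark makes $\Omega^{n+1}M$ even $4$-torsionfree, in particular $3$-torsionfree. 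Running the induction gives $\Ext^1_R(\Omega^n M,R)=0$ for every $n$, hence $\Ext^i_R(M,R)=0$ for all $i\geq 1$ after dimension shifting; since $M\cong M^{*}$ the same vanishing holds for $M^{*}$, and $M$ is reflexive, so $M$ is totally reflexive.

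Part~(2) is then a direct corollary. Assuming $\Omega^n M$ is $3$-torsionfree for some $n\geq 0$, part~(1) makes it totally reflexive, so $\Ext^j_R(\Omega^n M,R)=0$ for every $j>0$; dimension shifting along a projective resolution gives $\Ext^{n+j}_R(M,R)\cong\Ext^j_R(\Omega^n M,R)=0$ for all $j\geq 1$, that is, $\Ext^i_R(M,R)=0$ whenever $i>n$.

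The only real content, and the single place where \textbf{(sd)} is used, is the inductive bootstrapping in the forward direction of~(1); everything else is dimension shifting. The routine obstacles are purely bookkeeping: syzygies and transposes are defined only up to projective (free) summands, which is harmless both for $\Ext_R(-,R)$ and for self-duality since $(M\oplus R)^{*}\cong M^{*}\oplus R$, and one must keep straight the stable identification $\Omega^2\Tr M\cong M^{*}$ that underlies the shift between $\Ext_R(M^{*},R)$ and $\Ext_R(\Tr M,R)$.
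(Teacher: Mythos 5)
Your proposal is correct and follows essentially the same route as the paper: the forward direction of (1) is exactly the paper's bootstrapping loop ($3$-torsionfree $\Rightarrow$ reflexive $\Rightarrow$ self-dual $\Rightarrow$ $\Ext^1_R(-,R)=0$ by Lemma~\ref{first-lemma} $\Rightarrow$ the syzygy is again $3$-torsionfree), and (2) is obtained by the same dimension shifting. The only difference is that you spell out the backward implication of (1) via the stable identification $\Omega^2\Tr M\cong M^{*}$, where the paper simply cites the standard fact that totally reflexive modules are $n$-torsionfree for all $n$.
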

\begin{proof}
    We start with noting that a self-dual module $M$ is totally reflexive if and only if $\Ext_R^i(M,R) = 0$ for all $i > 0$. A totally reflexive module is always $3$-torsionfree. We will show the converse. Assume that $M$ is $3$-torsionfree. In particular, it is reflexive and by our assumption it is self-dual. Hence Lemma \ref{first-lemma} implies that $\Ext_R^1(M,R) = 0$. This implies that $\Omega M$ is $4$-torsionfree and in particular $3$-torsionfree and we can apply Lemma \ref{first-lemma} to get $\Ext_R^2(M,R) = \Ext_R^1(\Omega M, R) = 0$. By induction, this proves the first assertion.

    The second assertion follows from the first as we have $\Ext_R^j(\Omega^n N, R) = \Ext^{j+n}_R(M,R)$ for all $j > 0$.
\end{proof}
\begin{remark}
    Let $(R, \m,k)$ be a commutative Noetherian local ring that satisfies the property \textbf{(sd)}. If the depth $t$ of $R$ is at least 2, then $R$ is Gorenstein. This is an immediate consequence of Lemma \ref{second-lemma}. Indeed, by \cite[Theorem 4.1]{Dey-Takahashi}, we know that $\Omega^t k$ is $3$-torsionfree. Therefore, it is totally reflexive. By definition, this means $k$ has finite Gorenstein dimension. This implies that $R$ is Gorenstein \cite{Auslander-Bridger}. However, our main theorem improves this remark as we prove below.
\end{remark}
The self-dual property emposes a bound on the depth as we prove below.
\begin{lemma}\label{depth-at-most-three}
    Let $(R, \m,k)$ be a commutative Noetherian local ring that satisfies the property \textbf{(sd)}. Then, the depth of $R$ is at most three.
\end{lemma}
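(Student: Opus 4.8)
The plan is to argue by contradiction. Suppose $\depth R = t \geq 4$; I will exhibit a $3$-torsionfree module that is not totally reflexive, contradicting Lemma \ref{second-lemma}(1). The natural candidate is the third syzygy $\Omega^3 k$ of the residue field. The first step is to record that $\Omega^3 k$ is $3$-torsionfree: it is a third syzygy module, and an $n$-th syzygy is always $n$-torsionfree. (Over the ring at hand this is especially transparent, since by the preceding Remark $t \geq 2$ forces $R$ to be Gorenstein, and there $n$-torsionfreeness is Serre's condition $(S_3)$, which $\Omega^3 k$ satisfies because $(\Omega^3 k)_{\mathfrak p}$ is free for every non-maximal prime $\mathfrak p$, while $\depth \Omega^3 k = 3$ at $\m$.) Granting this, Lemma \ref{second-lemma}(1) upgrades $\Omega^3 k$ to a totally reflexive module.

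The contradiction comes from two incompatible depth computations. On the one hand, a nonzero totally reflexive module $M$ satisfies $\depth M = \depth R = t$; this is the Auslander--Bridger formula for modules of Gorenstein dimension zero, and alternatively one may use that, $R$ being Gorenstein, totally reflexive modules are exactly the maximal Cohen--Macaulay ones, whose depth equals $\dim R = t$. On the other hand, a short depth chase along the exact sequences $0 \to \Omega^{i+1} k \to F_i \to \Omega^i k \to 0$ (with $F_i$ free of depth $t$), starting from $\depth k = 0$, shows $\depth \Omega^3 k \leq 3$: were it at least $4$, the depth lemma applied successively would give $\depth \Omega^2 k \geq 3$, then $\depth \Omega^1 k \geq 2$, then $\depth k \geq 1$, which is false. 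Since $t \geq 4$, the equality $t = \depth \Omega^3 k \leq 3$ is absurd, so $\depth R \leq 3$. (Note $\Omega^3 k \neq 0$, as otherwise $\pd_R k \leq 2$ and $R$ would be regular of dimension at most $2$, contradicting $t \geq 4$; this is what makes the depth bound meaningful.)

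The step I expect to be the main obstacle is the first one: cleanly justifying that $\Omega^3 k$ is $3$-torsionfree and invoking this fact in the correct generality. Everything afterward is a formal consequence of Lemma \ref{second-lemma} and the elementary depth lemma. The torsionfreeness of third syzygies is standard, but one should pin down a precise reference (it holds over any Noetherian ring, and certainly over the Gorenstein ring we are dealing with). Once it is in hand, the point is simply that the self-dual hypothesis, funnelled through Lemma \ref{second-lemma}, converts a module of depth $3$ into one of depth $t$, which cannot happen when $t \geq 4$.
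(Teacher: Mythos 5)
Your proof is correct in outline and shares the paper's central idea --- assume $\depth R = t \geq 4$, show $\Omega^3 k$ is $3$-torsionfree, and let Lemma \ref{second-lemma} produce a contradiction --- but the two arguments diverge at both the entry and the exit. For the entry, the paper needs neither syzygy theorems nor the Gorenstein reduction: since $t \geq 4$ we have $\Ext_R^i(k,R) = 0$ for $i = 1,2,3$, and iterating item (4) of the Remark (if $M$ is $n$-torsionfree and $\Ext_R^1(M,R)=0$, then $\Omega M$ is $(n+1)$-torsionfree), starting from $k$, immediately gives that $\Omega^3 k$ is $3$-torsionfree. For the exit, the paper applies Lemma \ref{second-lemma}(2) to conclude $\Ext_R^i(k,R) = 0$ for all $i > 3$, contradicting $\Ext_R^t(k,R) \neq 0$ (the depth being $t$); you instead apply Lemma \ref{second-lemma}(1), the Auslander--Bridger formula for a nonzero totally reflexive module, and a depth chase along the syzygy sequences. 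Your exit is valid but longer and imports more machinery; the Ext-rigidity contradiction is the cleaner one here.

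One warning. Your headline justification for the entry step --- ``an $n$-th syzygy is always $n$-torsionfree,'' asserted to hold over any Noetherian ring --- is false in that generality; the implication that always holds is the converse ($n$-torsionfree modules are $n$-th syzygies, by Auslander--Bridger), and already second syzygies need not be reflexive over general rings. Your parenthetical rescue is what actually carries the step: the preceding Remark gives that $R$ is Gorenstein (since $\depth R \geq 2$ and $R$ satisfies \textbf{(sd)}), over a Gorenstein local ring $n$-torsionfree is equivalent to $(S_n)$, and $\Omega^3 k$ satisfies $(S_3)$ by the local freeness and depth computations you describe. So the proof survives, but you should not lean on the general claim, and the Ext-vanishing route above makes the detour through Gorensteinness unnecessary.
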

\begin{proof}
    Assume that the depth of $R$ is $t \geq 4$. Then, we have $\Ext_R^i(k, R) = 0$ for $i = 1, 2, 3$ and $\Ext_R^t(k,R) \neq 0$. The vanishing implies that $\Omega^3 k$ is $3$-torsionfree. Then, by Lemma \ref{second-lemma}(2), we see that $\Ext_R^i(k,R) = 0$ for all $i > 3$ which contradicts with the nonvanishing $\Ext_R^t(k,R) \neq 0$.
\end{proof}
The self-dual property also has control over the structure of free resolutions.
\begin{lemma}\label{keylemma}
    Let $R$ be a commutative Noetherian ring and $M$ be a $3$-torsionfree $R$-module. If both $M$ and $\Omega M$ are self-dual, then $M$ has a $2$-periodic resolution.
\end{lemma}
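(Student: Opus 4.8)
The plan is to prove the sharper statement that $\Omega^2 M \cong M$ up to free summands, which is exactly what it means for $M$ to admit a $2$-periodic resolution. The whole argument is a telescoping computation carried out modulo free summands, assembled from two structural identities relating the dual $(-)^*$, the transpose $\Tr$ and the syzygy $\Omega$, together with the vanishing $\Ext^1_R(M,R)=0$ that Lemma \ref{first-lemma} supplies from the hypotheses that $M$ is self-dual and $3$-torsionfree.

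First I would record the two identities. For \emph{any} finitely generated module $N$ with projective presentation $P_1\xrightarrow{d}P_0\to N\to 0$, dualizing gives the four-term exact sequence $0\to N^*\to P_0^*\xrightarrow{d^*}P_1^*\to \Tr N\to 0$; splitting it into two short exact sequences identifies $\imof(d^*)$ with $\Omega\Tr N$ and hence $N^*$ with $\Omega^2\Tr N$, both up to free summands. Applying this to $N=M$ and to $N=\Omega M$, and feeding in self-duality ($M\cong M^*$ and $\Omega M\cong(\Omega M)^*$), yields
\[
M\cong \Omega^2\Tr M,\qquad \Omega M\cong \Omega^2\Tr(\Omega M).
\]
The second identity compares the transposes of $M$ and $\Omega M$. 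Fixing a free resolution $\cdots\to F_2\xrightarrow{d_2}F_1\xrightarrow{d_1}F_0\to M\to 0$, the cochain complex $F_0^*\xrightarrow{d_1^*}F_1^*\xrightarrow{d_2^*}F_2^*$ has homology $\Ext^1_R(M,R)$ at the middle spot, which vanishes by Lemma \ref{first-lemma}. Thus $\ker(d_2^*)=\imof(d_1^*)$, so $d_2^*$ factors through an injection $\Tr M=\coker(d_1^*)\hookrightarrow F_2^*$ whose cokernel is $\coker(d_2^*)=\Tr(\Omega M)$. This realizes $\Tr M$ as a first syzygy of $\Tr(\Omega M)$, i.e. there is a short exact sequence $0\to \Tr M\to F_2^*\to \Tr(\Omega M)\to 0$ and hence $\Tr M\cong \Omega\Tr(\Omega M)$ up to free summands.

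With these in hand the conclusion is immediate. Substituting $\Tr M\cong \Omega\Tr(\Omega M)$ into $M\cong \Omega^2\Tr M$ gives $M\cong \Omega^3\Tr(\Omega M)$, while applying $\Omega$ to $\Omega M\cong \Omega^2\Tr(\Omega M)$ gives $\Omega^2 M\cong \Omega^3\Tr(\Omega M)$. Comparing the two shows $M\cong \Omega^2 M$ up to free summands, as required. All manipulations are legitimate because $\Omega$ and $\Tr$ are well defined functors on the stable category, where free modules are zero, so a stable isomorphism is preserved under applying $\Omega$ or $\Tr$.

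The genuinely new input is only the single vanishing $\Ext^1_R(M,R)=0$; everything else is formal homological bookkeeping. The step I would watch most closely is precisely that bookkeeping of projective/free summands, since each syzygy and transpose above is a priori defined only up to such summands, so every ``$\cong$'' lives in the stable category. To upgrade the stable isomorphism $\Omega^2 M\cong M$ to an honest $2$-periodic free resolution I would pass to a resolution with no free summands (a minimal one in the local case, or the standard cancellation argument in general) so that the stable isomorphism becomes a genuine isomorphism $\Omega^2 M\cong M$ and the resolution repeats with period two. I expect this normalization, rather than the homological algebra, to be the only place demanding care.
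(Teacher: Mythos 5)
Your proof is correct, but it takes a genuinely different route from the paper's. You work in the stable category, combining the identity $N^*\cong\Omega^2\Tr N$ (for $N=M$ and $N=\Omega M$, fed with self-duality) with the observation that $\Ext^1_R(M,R)=0$ forces $\Tr M\cong\Omega\Tr(\Omega M)$, and then telescope to get $M\cong\Omega^2M$ up to free summands. The paper instead dualizes the defining sequence $0\to\Omega M\to F\to M\to 0$ directly: since $\Ext^1_R(M,R)=0$ the dual sequence $0\to M^*\to F^*\to(\Omega M)^*\to 0$ is exact, and self-duality turns it into $0\to M\to F\to\Omega M\to 0$ with the \emph{same} free module $F$; splicing the two sequences produces an honest $2$-periodic resolution on the spot. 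Both arguments use exactly the same input (the single vanishing from Lemma \ref{first-lemma} plus self-duality of $M$ and $\Omega M$), but the paper's version buys an explicit periodic resolution with no cancellation step, which matters because the lemma is stated for an arbitrary commutative Noetherian ring: your final normalization from the stable isomorphism $M\cong\Omega^2M$ to a genuine $2$-periodic resolution is unproblematic in the local case (free summands cancel since $\End_R(R)=R$ is local), but in the non-local setting cancellation of projective summands is not automatic, so that last step would need either an extra argument or a restriction to local rings. Your approach, on the other hand, makes transparent that the mechanism is purely formal bookkeeping with $\Omega$ and $\Tr$, which is a useful perspective. If you want to keep your route, I would suggest either adding the splicing observation to bypass the cancellation issue, or stating the conclusion as a stable isomorphism $M\cong\Omega^2M$ (which is all the paper's application to Betti numbers actually requires).
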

\begin{proof}
    Since $M$ is self-dual and $3$-torsionfree, we have the vanishing $\Ext_R^1(M,R) = 0$ by Lemma \ref{first-lemma}. We start with a short exact sequence
    \begin{align*}
        0 \to \Omega M \to F \to M \to 0
    \end{align*}
    defining $\Omega M$ and apply $(-)^*$ to it. This gives us an exact sequence 
    \begin{align*}
    0 \to M^* \to F^* \to (\Omega M)^* \to \Ext_R^1(M,R)
    \end{align*}
    which, in turn, gives us a short exact sequence 
    \begin{align*}
    0 \to M \to F \to \Omega M \to 0
    \end{align*}
    by our assumptions. By splicing these short exact sequences, we get a 2-periodic free resolution 
    \begin{align*}
        \xymatrix{
         &&  M\ar@{-->}[dr] &&  &&  \\
         \ldots \ar[r]&F\ar[ur] \ar[rr]&  &F \ar@{-->}[dr]\ar[rr]&  &F\ar[r]& M \to 0 \\
        \Omega M \ar[ur]&&  && \Omega M\ar@{.>}[ur] &&  
        }
    \end{align*}
    of $M$, as required.
\end{proof}
Now we are able to prove our first theorem.
\begin{theorem}
    Let $R$ be a commutative Noetherian local ring of depth $t \geq 2$. If $R$ satisfies the condition \textbf{(sd)}, then it is a hypersurface ring of dimension at most three.
\end{theorem}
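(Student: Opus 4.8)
The plan is to feed the self-duality of the high syzygies of $k$ into Lemma~\ref{keylemma} to produce a $2$-periodic resolution, and then to recognise this periodicity through the bounded-Betti-number characterisation of hypersurface rings. To set up, I use that $\depth R = t \geq 2$, so the Remark preceding Lemma~\ref{depth-at-most-three} shows that $R$ is Gorenstein and that $\Omega^t k$ is totally reflexive. In particular $\Omega^t k$ is $3$-torsionfree, hence reflexive, hence self-dual by \textbf{(sd)}. Since the class of totally reflexive modules is closed under syzygies, $\Omega^{t+1}k = \Omega(\Omega^t k)$ is totally reflexive as well, thus $3$-torsionfree and again self-dual by \textbf{(sd)}.

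Now set $M := \Omega^t k$. Then $M$ is $3$-torsionfree and both $M$ and $\Omega M$ are self-dual, so Lemma~\ref{keylemma} applies and yields a $2$-periodic free resolution of $M$ in which every term is one fixed free module $F$. Hence $\beta_i^R(M) \leq \operatorname{rank} F$ for all $i$ (the minimal free resolution is a direct summand of any free resolution, so its ranks are no larger), and splicing this periodic tail onto the first $t$ steps of a minimal free resolution of $k$ identifies $\beta_{t+i}^R(k)$ with $\beta_i^R(M)$ for $i \geq 0$. Therefore the Betti numbers $\beta_i^R(k)$ are bounded.

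The final step, and the one I expect to carry the real weight, is to invoke the classical theorem (due to Gulliksen) that a Noetherian local ring whose residue field has bounded Betti numbers is a hypersurface; equivalently, $\operatorname{cx}_R k \leq 1$ already forces $R$ to be a complete intersection of codimension at most one. Everything preceding this is a formal consequence of \textbf{(sd)} via Lemmas~\ref{first-lemma}--\ref{keylemma}, so the genuine obstacle is exactly this passage from the periodicity of a single syzygy to a statement about the defining equations of $R$, which is precisely what that theorem supplies. Granting it, $R$ is a hypersurface, hence Cohen--Macaulay, so $\dim R = \depth R = t$; Lemma~\ref{depth-at-most-three} then bounds $t \leq 3$, and therefore $R$ is a hypersurface ring of dimension at most three, as claimed.
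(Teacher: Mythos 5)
Your proposal is correct and follows essentially the same route as the paper: use the Dey--Takahashi result to get that $\Omega^t k$ and $\Omega^{t+1}k$ are $3$-torsionfree and hence self-dual under \textbf{(sd)}, apply Lemma~\ref{keylemma} to obtain an eventually periodic resolution of $k$, conclude $R$ is a hypersurface from the boundedness of the Betti numbers of $k$, and finish with Lemma~\ref{depth-at-most-three}. The only cosmetic differences are that you pass to $\Omega^{t+1}k$ via closure of totally reflexive modules under syzygies (the paper uses $\Ext^1_R(\Omega^t k,R)=0$ and the $n$-torsionfree shifting remark) and you cite the bounded-Betti-number characterisation in the form of Gulliksen's theorem rather than the paper's reference to Avramov.
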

\begin{proof}
    Since $R$ has depth $t$  at least $2$, we know by \cite[Theorem 4.1]{Dey-Takahashi} that the $t$-th syzygy $\Omega^t k$ is $3$-torsionfree. Since it is self-dual by Lemma \ref{first-lemma}, we know that $\Omega^{t+1}k$ is also $3$-torsionfree. Now, we have two consecutive syzygies which are self-dual and 3-torsionfree. Therefore, Lemma \ref{keylemma} tells us that the ground field $k$ must have an eventually periodic resolution. In particular, the Betti numbers of $k$ are eventually constant which can only happen if $R$ is a hypersurface \cite[Remark 8.1.1.3]{Avramov}. The last assertion follows from Lemma \ref{depth-at-most-three} as the depth of a hypersurface ring equals the Krull dimension.
\end{proof}
If we further assume the \textbf{(frt)} property, we can remove the condition on the depth.
\begin{theorem}
    Let $R$ be a complete commutative Noetherian local ring. If $R$ satisfies the conditions \textbf{(frt)} and \textbf{(sd)}, then it is a simple singularity of dimension at most three provided that it admits a non-free $3$-torsionfree module. 
\end{theorem}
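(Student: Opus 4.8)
The plan is to reduce the statement to the classification theorem of Christensen--Piepmeyer--Striuli--Takahashi \cite{Christensen-Piepmeyer-Striuli-Takahashi}: a complete commutative Noetherian local ring that admits only finitely many indecomposable Gorenstein projective (equivalently, totally reflexive) modules up to isomorphism, at least one of which is non-free, must be a simple singularity. The bridge between that hypothesis and ours is Lemma \ref{second-lemma}(1), which says that under \textbf{(sd)} the classes of $3$-torsionfree modules and totally reflexive modules coincide. So the essential move is to translate the data we are given --- a non-free $3$-torsionfree module together with \textbf{(frt)} --- into the precise input required by \cite{Christensen-Piepmeyer-Striuli-Takahashi}, and then to extract the dimension bound separately.

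First I would produce a non-free Gorenstein projective module. By hypothesis $R$ admits a non-free $3$-torsionfree module $M$, and since $R$ satisfies \textbf{(sd)}, Lemma \ref{second-lemma}(1) upgrades $M$ to a totally reflexive module. As $M$ is non-free, $R$ therefore carries at least one non-free Gorenstein projective module. This is exactly the point at which the extra hypothesis ``$R$ admits a non-free $3$-torsionfree module'' is consumed: without it the classification could fail, since, for example, a regular local ring has no non-free totally reflexive modules at all.

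Next I would count indecomposables. Every totally reflexive module is reflexive by definition, so the indecomposable Gorenstein projective modules form a subcollection of the indecomposable reflexive modules. By \textbf{(frt)} there are only finitely many of the latter up to isomorphism, and hence only finitely many indecomposable Gorenstein projective modules up to isomorphism. Combined with the non-free module constructed above, the hypotheses of \cite{Christensen-Piepmeyer-Striuli-Takahashi} are met, so $R$ is a simple singularity.

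Finally, for the dimension bound I would use that a simple singularity is a hypersurface and hence Cohen--Macaulay, so that $\depth R = \dim R$; by Lemma \ref{depth-at-most-three} we have $\depth R \leq 3$, whence $\dim R \leq 3$. The genuine mathematical content here is imported wholesale from the cited classification theorem and from Lemma \ref{second-lemma}; the rest is bookkeeping. Accordingly, the step I expect to require the most care is not a computation but the hypothesis-matching: verifying that the non-free $3$-torsionfree module really does furnish the non-free Gorenstein projective module demanded by \cite{Christensen-Piepmeyer-Striuli-Takahashi}, and that the reflexive/totally reflexive passage of indecomposables is valid, so that the external theorem applies verbatim.
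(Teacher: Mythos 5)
Your proposal is correct and follows essentially the same route as the paper: under \textbf{(sd)}, Lemma \ref{second-lemma}(1) identifies $3$-torsionfree modules with totally reflexive ones, \textbf{(frt)} then bounds the indecomposable totally reflexive modules, and \cite[Theorem A]{Christensen-Piepmeyer-Striuli-Takahashi} applies. You additionally spell out the dimension bound via Lemma \ref{depth-at-most-three} and the Cohen--Macaulayness of hypersurfaces, a step the paper's proof leaves implicit.
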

\begin{proof}
    If there are finitely many reflexive modules over $R$, then there are finitely many totally reflexive modules over $R$. With the condition \textbf{(sd)}, existence of a non-free $3$-torsionfree module is equivalent to the existence of a non-free totally reflexive module. Then, by \cite[Theorem A]{Christensen-Piepmeyer-Striuli-Takahashi} we conclude that $R$ has to be a simple singularity.
\end{proof}
We finish by a remark on $3$-torsionfree modules over Arf rings.
\begin{remark}
    Lipman, in his original work on Arf rings \cite{Lipman}, proved that a local Arf ring must have minimal multiplicity. On the other hand, Avramov and Martinskovsky proved in \cite[Example 3.5]{Avramov-Martsinkovsky} that a totally-reflexive module over a Golod local ring which is not a hypersurface must be free. Therefore, we can conclude that non-Gorenstein Arf local rings can not admit 3-torsionfree modules. That is, if $R$ is a non-Gorenstein Arf local ring and $M$ is a finitely generated non-free reflexive $R$-module, then $\Ext_R^1(M,R) \neq 0$. 
\end{remark}
\section*{Acknowledgments}

I would like to thank the members of the Graz Algebra and Geometry Seminar for listening to an earlier version of this paper and offering insights. Also many thanks to Hailong Dao for helpful comments.

\bibliographystyle{alpha}
\bibliography{arf-arxiv.bib}

\end{document}